\documentclass[11pt]{article}
\usepackage[letterpaper,margin=0.98in]{geometry}
\usepackage[T1]{fontenc}
\usepackage{lmodern}
\usepackage{amsmath,amssymb,amsthm}
\usepackage{microtype}
\usepackage{needspace}
\usepackage{xcolor}
\usepackage{hyperref}
\definecolor{citationblue}{RGB}{0,65,200}
\hypersetup{
  colorlinks=true,
  linkcolor=citationblue,
  citecolor=citationblue,
  urlcolor=citationblue,
  filecolor=citationblue,
  pdfborder={0 0 0},
  pdftitle={BMO Spaces on Chord-Arc Curves},
  pdfauthor={Tailiang Liu and Yuliang Shen}}
\newcommand{\R}{\mathbb R}
\newcommand{\C}{\mathbb C}
\newcommand{\Chat}{\overline{\mathbb C}}
\newcommand{\Hup}{\mathbb H}
\newcommand{\Hlow}{\mathbb L}
\newcommand{\D}{\mathbb D}
\newcommand{\T}{\mathbb T}
\newcommand{\BMO}{\mathrm{BMO}}
\newcommand{\BMOA}{\mathrm{BMOA}}
\newcommand{\SQS}{\mathrm{SQS}}
\newcommand{\ellg}{\ell}
\newtheorem{theorem}{Theorem}

\title{Three BMO-type Spaces Characterizing Chord--Arc Curves}
\author{Tailiang Liu\thanks{School of Mathematics and Physics, Jiangsu University of Technology, Changzhou 213001, China. Email: \href{mailto:ltlmath@jsut.edu.cn}{\texttt{ltlmath@jsut.edu.cn}}.}\qquad
Yuliang Shen\thanks{Department of Mathematics, Soochow University, Suzhou 215006, China. Email: \href{mailto:ylshen@suda.edu.cn}{\texttt{ylshen@suda.edu.cn}}.}}
\date{}
\begin{document}
\maketitle
\begin{abstract}
In this note, we introduce three BMO-type spaces on a locally rectifiable Jordan curve and prove that it is a chord-arc curve if and only if these spaces coincide with equivalent norms.
\end{abstract}
\noindent\textbf{Keywords:} BMO Space, Chord-Arc Curve, Strong Quasisymmetry Homeomorphism.\par
\noindent\textbf{MSC 2020:} 30C62, 30H35, 42B35.

\section{The main result and its proof}
Let $\C$ be the complex plane, $\Chat=\C\cup\{\infty\}$, $\R$ the real line, and $\Hup$ and $\Hlow$ the upper and lower half-planes, respectively. Recall that a locally integrable function $f$ on $\R$ belongs to $\BMO(\R)$ if
\begin{equation}\label{eq:bmo-R}
 \|f\|_{\BMO(\R)}
 =\sup_{I\subset\R}\frac{1}{|I|}\int_I|f(x)-f_I|\,dx<\infty,
 \qquad f_I=\frac{1}{|I|}\int_I f(x)\,dx,
\end{equation}
where $I$ ranges over bounded intervals and $|I|$ denotes their length.

Let $\Gamma\subset\Chat$ be a locally rectifiable Jordan curve passing through $\infty$, and let $ds=|d\zeta|$ be arc-length measure on $\Gamma$. Analogously, $\BMO(\Gamma)$ consists of all locally integrable functions $f$ for which
\begin{equation}\label{eq:bmo-Gamma}
 \|f\|_{\BMO(\Gamma)}
 =\sup_{J\subset\Gamma}
 \frac{1}{\ellg(J)}\int_J|f(\zeta)-f_J|\,ds<\infty,
 \qquad f_J=\frac{1}{\ellg(J)}\int_Jf\,ds,
\end{equation}
where  $\ellg(J)$ denotes  length of $J$. An arc-length parametrization $\gamma:\R\to\Gamma$ evidently induces an isometric isomorphism $f\mapsto f\circ\gamma$ between $\BMO(\Gamma)$ and $\BMO(\R)$.

Recall also the M\"obius invariance of BMO (see Garnett~\cite{Garnett}). If $\tau:\Hup\to\D$ is a M\"obius transformation, where $\D$ is the unit disk and $\T=\partial\D$, then
\begin{equation}\label{eq:mob}
 \|f\circ\tau\|_{\BMO(\R)}\asymp\|f\|_{\BMO(\T)},
 \qquad f\in\BMO(\T).
\end{equation}
Here  $A\asymp B$ means that $A$ and $B$ are comparable up to positive constants. This motivates conformally invariant BMO spaces on $\Gamma$.

Let $\Omega^+$ and $\Omega^-$ be the complementary domains of $\Gamma$, and choose conformal maps $\varphi:\Hup\to\Omega^+$ and $\psi:\Hlow\to\Omega^-$ fixing $\infty$. Their boundary extensions are homeomorphisms from $\R\cup\{\infty\}$ onto $\Gamma$. Since $\Gamma$ is locally rectifiable, these extensions are locally absolutely continuous on $\R$ and their boundary derivatives are nonzero almost everywhere (see~\cite{Duren}). Hence, for a measurable  function $f$ with respect to $ds$, the pullbacks below are well defined up to Lebesgue-null sets. Set
\begin{equation}\label{eq:bmo-pm}
 \begin{aligned}
 \BMO^+(\Gamma)&=\{f:f\circ\varphi\in\BMO(\R)\},
 &\quad\|f\|_{\BMO^+(\Gamma)}&=\|f\circ\varphi\|_{\BMO(\R)},\\
 \BMO^-(\Gamma)&=\{f:f\circ\psi\in\BMO(\R)\},
 &\quad\|f\|_{\BMO^-(\Gamma)}&=\|f\circ\psi\|_{\BMO(\R)}.
 \end{aligned}
\end{equation}
 The definitions of $\BMO^\pm(\Gamma)$ do not depend on the choices of $\varphi,\psi$, up to equivalent seminorms (indeed, any two such normalized maps differ by an affine automorphism of the half-plane).

The comparison of these spaces is suggested by the authors' study of the BMO boundedness of the Cauchy integral on chord-arc curves~\cite[Section~3]{LiuShen}. Recall that $\Gamma$ is \emph{chord-arc} if, for some $C>0$,
$\ellg(\Gamma[z,w])\le C|z-w|$ for all finite $z,w\in\Gamma$, where $\Gamma[z,w]$ is the subarc between $z$ and $w$ not containing $\infty$. We obtain the following characterization.

\begin{theorem}\label{thm:main}
A locally rectifiable Jordan curve $\Gamma\subset\Chat$ passing through $\infty$ is chord-arc if and only if
\begin{equation}\label{eq:theorem}
 \BMO(\Gamma)=\BMO^+(\Gamma)=\BMO^-(\Gamma)
\end{equation}
as spaces of boundary functions, with equivalent seminorms.
\end{theorem}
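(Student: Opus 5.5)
Write $h=\gamma^{-1}\circ\varphi:\R\to\R$ and $k=\gamma^{-1}\circ\psi:\R\to\R$, where $\gamma$ is an arc-length parametrization of $\Gamma$. These are increasing homeomorphisms. By the isometry $f\mapsto f\circ\gamma$, the identity \eqref{eq:theorem} with equivalent seminorms says that the composition operators $P_h:g\mapsto g\circ h$ and $P_k:g\mapsto g\circ k$ are isomorphisms of $\BMO(\R)$ onto itself, with $P_h^{-1}=P_{h^{-1}}$ and $P_k^{-1}=P_{k^{-1}}$. The plan is to reduce everything to the classical theorem of Jones: an increasing homeomorphism $h$ of $\R$ induces a bounded isomorphism $P_h$ of $\BMO(\R)$ if and only if $h$ is strongly quasisymmetric, i.e.\ locally absolutely continuous with $h'\in A_\infty$. (Jones states this for homeomorphisms that are, a priori, only homeomorphisms, with "isomorphism" meaning $P_h$ and $P_{h^{-1}}$ both bounded.) Equivalently, this holds iff for every interval $I$ and measurable $E\subset I$ we have $|h(E)|/|h(I)|\le C(|E|/|I|)^\alpha$, together with the symmetric condition for $h^{-1}$.

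\textbf{Necessity.} Assume $\Gamma$ is chord-arc. By the Lavrentiev / Jerison--Kenig theorem, for a chord-arc curve through $\infty$ the boundary map $\varphi$ satisfies $|\varphi'|\in A_\infty(\R)$, and $\ellg(\varphi(I))=\int_I|\varphi'|$. Hence $h(x)=\int_0^x|\varphi'|$ (after normalization) is strongly quasisymmetric. The same holds for $\psi$ on $\Hlow$, since the exterior of a chord-arc curve is again a chord-arc domain. Jones' theorem then gives boundedness of $P_h$, $P_h^{-1}$, $P_k$ and $P_k^{-1}$, which is \eqref{eq:theorem}.

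\textbf{Sufficiency.} Assume \eqref{eq:theorem}. Jones' theorem, applied to $P_h$ and to $P_k$, gives that $h$ and $k$ are strongly quasisymmetric, i.e.\ $|\varphi'|,|\psi'|\in A_\infty$. The converse step needs care. Having $|\varphi'|\in A_\infty$ only makes $\Omega^+$ a "Bishop--Jones" type domain, not necessarily a chord-arc domain; there are non-chord-arc (even non-quasicircle) examples with $|\varphi'|\in A_\infty$. This is why the two-sided hypothesis is essential. We use it as follows.

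First, the welding $w=\psi^{-1}\circ\varphi=k^{-1}\circ h$ is a composition of strongly quasisymmetric maps, hence strongly quasisymmetric. In particular it is quasisymmetric.

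Second, a Jordan curve whose welding is quasisymmetric is a quasicircle. I would argue this by conformal welding uniqueness: the quasiconformal extension of $w$ produces a quasicircle $\Gamma'$ with the same welding, and $\Gamma$ is then a conformal image of $\Gamma'$, hence equal to it up to an affine map. Justifying this uniqueness for a merely rectifiable $\Gamma$ is the main obstacle, since conformal removability of $\Gamma$ is not known a priori. I would try to handle it by noting that locally rectifiable curves are conformally removable for maps that are homeomorphisms of $\Chat$ conformal off $\Gamma$: such maps are absolutely continuous on lines, via Besicovitch/Painlevé-type arguments for rectifiable sets.

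Third, once $\Gamma$ is a quasicircle through $\infty$ with $|\varphi'|\in A_\infty$, the known result of Jerison--Kenig / Zinsmeister applies: a quasicircle that is locally rectifiable and Ahlfors-regular is chord-arc. Ahlfors regularity should follow from $|\varphi'|\in A_\infty$ together with the quasicircle distortion estimates $\operatorname{diam}\varphi(I)\asymp|\varphi'|$-related quantities on Whitney boxes. Concretely, for a quasidisk one has $\ellg(\varphi(I))=\int_I|\varphi'|\asymp|I|\,|\varphi'(z_I)|\asymp\operatorname{diam}\varphi(I)$ when $|\varphi'|\in A_\infty$, and a quasicircle has $\operatorname{diam}\Gamma[z,w]\asymp|z-w|$. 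Combining these gives $\ellg(\Gamma[z,w])\le C|z-w|$.
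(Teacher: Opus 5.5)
Your necessity argument and the first half of sufficiency coincide with the paper's: Jones' theorem gives $h,k\in\SQS(\R)$, hence $|\varphi'|,|\psi'|\in A_\infty$, and the welding $k^{-1}\circ h$ is strongly quasisymmetric. On the step ``quasisymmetric welding $\Rightarrow$ quasicircle'' you are more careful than the paper, which simply asserts it. The clean justification is Besicovitch's theorem: sets of $\sigma$-finite length are removable for continuous functions holomorphic off them. Apply it directly to the glued homeomorphism, which is then affine; no ACL argument is needed.

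The genuine gap is your third step. The estimate $\int_I|\varphi'|\lesssim|I|\,|\varphi'(z_I)|$ does not follow from $|\varphi'|\in A_\infty$ and the quasicircle property.
\begin{itemize}
\item Write $\log|\varphi'|=P[\log|\varphi'|]-P[\mu]$ with $\mu\ge 0$ singular; equivalently $\varphi'=S\,O$ with $O$ outer and $S$ singular inner.
\item Since $|\varphi'|\in A_\infty$ (so $\log|\varphi'|\in\BMO(\R)$), you do get $|O(z_I)|=\exp P_{z_I}[\log|\varphi'|]\asymp\frac{1}{|I|}\int_I|\varphi'|$.
\item But $|\varphi'(z_I)|=|S(z_I)|\,|O(z_I)|$, and a condition on boundary values says nothing about $S$.
\item Since $z_I$ ranges over all of $\Hup$, your estimate for all $I$ forces $|S|\ge c>0$ on $\Hup$, hence $S$ is trivial. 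So your estimate is equivalent to $\Omega^+$ being a Smirnov domain.
\end{itemize}
Quasidisks with rectifiable boundary need not be Smirnov. For example, on $\D$ take $f'=S$ with $S$ a singular inner function whose logarithm has small Bloch norm. Becker's criterion makes $f(\D)$ a quasidisk with $|f'|=1$ a.e.\ on $\T$, yet it is not chord-arc. This is exactly why the Jerison--Kenig converse of Lavrentiev's theorem carries a Smirnov hypothesis.

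So ``quasicircle $+$ one-sided $A_\infty$ $\Rightarrow$ chord-arc'' is not available. The two-sided hypothesis has to be used for more than the quasisymmetry of the welding. The missing idea is what the paper extracts from $h_\Gamma\in\SQS(\R)$ via the Astala--Zinsmeister BMO welding theorem: $\log\varphi'\in\BMOA(\Hup)$. Hence $\log|\varphi'|$ is the Poisson integral of its boundary values and $S$ is trivial.

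With that in hand, your computation does close. For $\Gamma[z,w]=\varphi(I)$ one gets $\ellg(\varphi(I))\asymp|I|\,|\varphi'(z_I)|\asymp\operatorname{diam}\varphi(I)\lesssim|z-w|$, the last step by bounded turning. This is essentially the Jerison--Kenig argument the paper cites.
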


\begin{proof}
Recall that an increasing homeomorphism $h:\R\to\R$ is \emph{strongly quasisymmetric}, denoted by $h\in\SQS(\R)$, if it is locally absolutely continuous and $h'$ is a Muckenhoupt $A_\infty$ weight. Jones' theorem~\cite{Jones} states that $h\in\SQS(\R)$ if and only if the composition operator $b\mapsto b\circ h$ is bounded on $\BMO(\R)$; in this case the operator is a bounded isomorphism.

By choosing the orientation of the arc--length parametrization $\gamma$ appropriately, we obtain two increasing homeomorphisms $h_+,h_-\colon\mathbb R\to\mathbb R$ such that $\gamma\circ h_+=\varphi$ and $\gamma\circ h_-=\psi$.
Thus $h_+'=|\varphi'|$ and $h_-'=|\psi'|$ almost everywhere.

If $\Gamma$ is chord-arc, Lavrentiev's theorem~\cite{Lavrentiev} yields $|\varphi'|,|\psi'|\in A_\infty(\R)$, so $h_\pm\in\SQS(\R)$.
For any measurable function $f$ on $\Gamma$, we have
\[
f\circ\varphi = (f\circ\gamma)\circ h_+,\qquad
f\circ\psi = (f\circ\gamma)\circ h_-.
\]
Hence $f\circ\varphi\in \mathrm{BMO}(\mathbb R)$ if and only if $f\circ\gamma\in \mathrm{BMO}(\mathbb R)$, with equivalent seminorms; the same holds for $f\circ\psi$.
Therefore $\mathrm{BMO}(\Gamma)=\mathrm{BMO}^+(\Gamma)=\mathrm{BMO}^-(\Gamma)$ with equivalent seminorms.
 This is precisely the argument used in ~\cite[proof of Theorem~3.2]{LiuShen}.

Conversely, suppose~\eqref{eq:theorem} holds. For any $b\in\BMO(\R)$, we have
\[
 \|b\circ h_\pm\|_{\BMO(\R)}
 \asymp\|b\circ\gamma^{-1}\|_{\BMO^\pm(\Gamma)}
 \asymp\|b\|_{\BMO(\R)}.
\]
By Jones' theorem, $h_\pm\in\SQS(\R)$; hence
\begin{equation}\label{eq:Ainfty}
 |\varphi'|,|\psi'|\in A_\infty(\R).
\end{equation}
Moreover, the conformal welding $h_\Gamma=\psi^{-1}\circ\varphi=h_-^{-1}\circ h_+$ is strongly quasisymmetric (the class $\SQS(\R)$ is a group). In particular, $h_\Gamma$ is quasisymmetric, so $\Gamma$ is a quasicircle. \Needspace{6\baselineskip}The BMO welding theorem of Astala and Zinsmeister~\cite[Theorem 4]{AstalaZinsmeister}, applied to $h_\Gamma$ and its inverse, gives
\begin{equation}\label{eq:AZ}
 \log\varphi'\in\BMOA(\Hup),\qquad
 \log\psi'\in\BMOA(\Hlow),
\end{equation}
where $\BMOA$ denotes the space of analytic BMO functions. As BMOA functions are the Poisson extensions of their boundary values, taking real parts in~\eqref{eq:AZ} gives
\[
 \log|\varphi'(x+iy)|
 =\frac1\pi\int_\R\frac{y\log|\varphi'(t)|}{(x-t)^2+y^2}\,dt,
 \qquad y>0,
\]
and the analogous formula for $\psi$ in $\Hlow$. Thus both $\Omega^+$ and $\Omega^-$ are Smirnov domains (see \cite{Pommerenke}). Together with~\eqref{eq:Ainfty} and the quasicircle property, the converse to Lavrentiev's theorem due to Jerison and Kenig~\cite[Theorem~4.2]{JerisonKenig} implies that $\Gamma$ is chord-arc.
\end{proof}


\begin{thebibliography}{99}
\footnotesize
\setlength{\itemsep}{0pt}
\setlength{\parsep}{0pt}     
\setlength{\lineskip}{0pt}

\bibitem{AstalaZinsmeister}
K.~Astala and M.~Zinsmeister, \href{https://doi.org/10.1007/BF01446592}{Teichm\"uller spaces and BMOA}, \emph{Math. Ann.} \textbf{289} (1991), 613--625.
\bibitem{Duren}
P. L. Duren, \emph{Theory of $H^p$ spaces}, volume 38 of Pure and Applied Mathematics. Academic Press, New York--London, 1970.
\bibitem{Garnett}
J.~B. Garnett, \emph{Bounded Analytic Functions}, Academic Press, New York, 1981.
\bibitem{JerisonKenig}
D.~S. Jerison and C.~E. Kenig, \href{https://doi.org/10.7146/math.scand.a-11956}{Hardy spaces, $A_\infty$, and singular integrals on chord-arc domains}, \emph{Math. Scand.} \textbf{50} (1982), 221--247.
\bibitem{Jones}
P.~W. Jones, \href{https://doi.org/10.1007/BF02384312}{Homeomorphisms of the line which preserve BMO}, \emph{Ark. Mat.} \textbf{21} (1983), 229--231.
\bibitem{Lavrentiev}
M.~A. Lavrentiev, Boundary problems in the theory of univalent functions, \emph{Amer. Math. Soc. Transl. (2)} \textbf{32} (1963), 1--35.
\bibitem{LiuShen}
T.~Liu and Y.~Shen, \href{https://doi.org/10.1007/s10114-024-2184-4}{The Faber operator acting on BMOA, BMO-Teichm\"uller space and chord-arc curves}, \emph{Acta Math. Sin. (Engl. Ser.)} \textbf{40} (2024), 2359--2387.
\bibitem{Pommerenke}
Ch.~Pommerenke, \href{https://doi.org/10.1007/978-3-662-02770-7}{\emph{Boundary Behaviour of Conformal Maps}}, Springer-Verlag, Berlin, 1992.
\end{thebibliography}
\end{document}